\theoremstyle{plain}
\newtheorem{theorem}{Theorem}[section]
\newtheorem{lemma}[theorem]{Lemma}
\theoremstyle{definition}
\newtheorem{definition}[theorem]{Definition}
\theoremstyle{remark}
\newtheorem{remark}{Remark}
\newtheorem{assumption}{Assumption}[section]
\crefname{assumption}{Assumption}{Assumptions}
\newcommand{\R}{\mathbb R}
\newcommand{\DD}{\mathcal D}
\newcommand{\LL}{\mathcal L}
\newcommand{\XX}{\mathcal X}
\newcommand{\YY}{\mathcal Y}
\DeclareMathOperator{\Ls}{L}
\DeclareMathOperator{\Hs}{H}
\DeclareMathOperator{\Ws}{W}
\DeclareMathOperator{\Cs}{C}
\DeclareMathOperator{\Vs}{V}
\DeclareMathOperator{\Xs}{X}
\DeclareMathOperator{\supp}{supp}
\newcommand{\brac}[1]{\left\lbrace{#1}\right\rbrace}
\newcommand{\paren}[1]{\left({#1}\right)}
\newcommand{\norm}[1]{\left\lVert{#1}\right\rVert}
\newcommand{\abs}[1]{\left\vert{#1}\right\vert}
\newcommand{\inprod}[1]{\left\langle{#1}\right\rangle}
\newcommand{\wt}[1]{\widetilde{#1}}
\newcommand{\wh}[1]{\widehat{#1}}
\newcommand{\ovl}[1]{\overline{#1}}
\newcommand{\bsm}[1]{\boldsymbol{#1}}
\newcommand{\dive}[1]{\nabla\cdot{#1}}
\newcommand{\veps}{\varepsilon}
\newcommand{\pa}{\partial}
\newcommand{\Gm}{\Gamma}
\newcommand{\Om}{\Omega}
\newcommand{\Sm}{\Sigma}
\newcommand{\fa}{\forall}
\newcommand{\sst}{\subset}
\newcommand{\xb}{\bsm{x}}
\newcommand{\vb}{\bsm{v}}
\newcommand{\nv}{\textbf{n}}
\newcommand{\zrb}{\bsm{0}}
\newcommand{\emb}{\hookrightarrow}
\newcommand{\dx}{\,\mathrm{d}\xb}
\newcommand{\ds}{\,\mathrm{d}s}
\newcommand{\dt}{\,\mathrm{d}t}
\newcommand{\di}{\,\mathrm{d}}
\newcommand{\q}{\quad}
\newcommand{\qq}{\qquad}
\newcommand{\qqq}{\qquad\quad}
\newcommand{\qqqq}{\qquad\qquad}
\newcommand{\qqqqq}{\qquad\qquad\quad}
\newcommand{\qqqqqq}{\qquad\qquad\qquad}
\newcommand{\qqqqqqq}{\qquad\qquad\qquad\quad}
\newcommand{\qqqqqqqq}{\qquad\qquad\qquad\qquad}
\newcommand{\qqqqqqqqq}{\qquad\qquad\qquad\qquad\quad}
\begin{document}


\title{Parabolic PDEs on a fixed domain with evolving subdomains: function spaces and well-posedness}

\author{
\name{Van Chien Le\textsuperscript{a}\thanks{CONTACT V.~C. Le. Email: vanchien.le@ugent.be} and Karel Van Bockstal\textsuperscript{b}}
\affil{\textsuperscript{a}IDLab, Department of Information Technology, Ghent University -- imec, Ghent, Belgium; \textsuperscript{b}Ghent Analysis \& PDE center, Department of Mathematics: Analysis, Logic and Discrete Mathematics, Ghent University, Ghent, Belgium}
}

\maketitle

\begin{abstract}
    This paper develops the necessary ingredients for the variational approach of initial boundary-value problems of parabolic partial differential equations on a fixed spatial domain containing evolving subdomains. In particular, we introduce function spaces for the variational solution that extend standard Sobolev--Bochner spaces to account for a coefficient associated with the time derivative that may be discontinuous across the evolving interface. We further show the density of smooth functions in these spaces by extending the mollification technique and the Reynolds transport theorem, and establish the corresponding ``embedding'' theory and an integration by parts formula. Finally, we prove the well-posedness of the space-time variational formulation in the natural setting using the Banach--Ne\v{c}as--Babu\v{s}ka theorem.
\end{abstract}

\begin{keywords}
    Parabolic PDEs; evolving subdomains; Sobolev--Bochner spaces; well-posedness
\end{keywords}

\section{Introduction}

Let $\Om \sst \R^d$, with $d = 1, 2, 3,$ be a fixed, open, and bounded domain with Lipschitz boundary $\pa\Om$, and the time interval $J := (0, T) \sst \R$, with $0 < T < \infty$. The spatial domain $\Om$ consists of two nonempty open Lipschitz subdomains $\Om_1(t)$ and $\Om_2(t)$, which are separated by an interface $\Gm(t)$ evolving in time (see \Cref{fig:combined}). In other words, 
\[
    \Om = \Om_1(t) \cup \Om_2(t) \cup \Gm(t), \qqq \Omega_1(t) \cap \Omega_2(t) = \emptyset, \qqq \Gm(t) = \pa\Om_1(t) \cap \pa\Om_2(t), 
\]
for all $t \in \ovl{J}$. We denote the space-time domain $Q := \Om \times J \sst \R^{d+1}$ and its subdomains
\[
    Q_1 = \brac{(\xb, t) \in Q: \xb \in \Om_1(t), t \in J}, \qqqq Q_2 = Q \setminus \ovl{Q_1}.
\]
The evolution of the interface and subdomains over time is governed by a prescribed velocity vector $\vb$. More specifically, we assume that there exists a sufficiently smooth mapping $\Phi : \ovl{\Om} \times \ovl{J} \to \ovl{\Om}$ such that $\Phi(\cdot, 0) = \operatorname{Id}$ and $\Phi(\cdot, t)$ is a diffeomorphism that maps from $\ovl{\Om_1(0)}$ and $\ovl{\Om_2(0)}$ onto $\ovl{\Om_1(t)}$ and $\ovl{\Om_2(t)}$, respectively, for all $t \in J$. The velocity vector $\vb : \ovl{Q} \to \R^d$ is defined by
\[
    \vb(\xb, t) = \dot{\Phi}\paren{\left[\Phi(\cdot, t)\right]^{-1}(\xb), t}, \qqqqq (\xb, t) \in \ovl{Q},
\]
where $\dot{\Phi}$ stands for the total derivative of $\Phi$ with respective to $t$, and $\left[\Phi(\cdot, t)\right]^{-1}$ is the inverse of $\Phi(\cdot, t)$. Throughout the paper, we assume that $\vb\in \Cs^1\paren{\ovl{Q}}^d$.

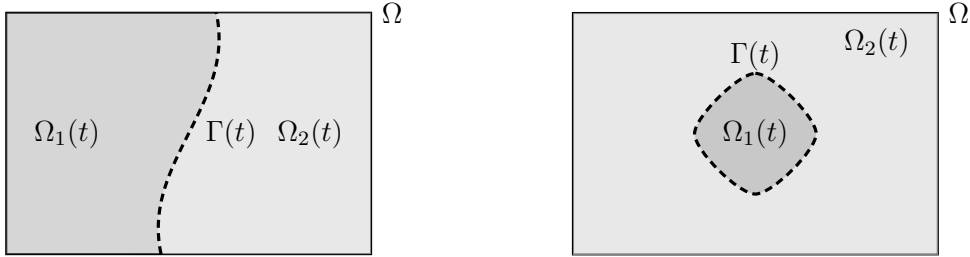
\begin{figure}[ht]
    \centering
    \begin{minipage}{0.48\textwidth}
        \centering
        \begin{tikzpicture}[scale=0.8]
            \draw[black, thick] (-3,-2) rectangle (3,2) node[right] {$\Omega$};
            
            \fill[gray!80, opacity=0.4] (-3,-2) -- (-3,2) -- plot[domain=2:-2, samples=50] ({0.5*sin(deg(\x))}, {\x}) -- cycle;
            
            \fill[gray!30, opacity=0.6] plot[domain=-2:2, samples=50] ({0.5*sin(deg(\x))}, {\x}) -- (3,2) -- (3,-2) -- cycle;
            
            \draw[black, very thick, densely dashed, domain=-2:2, samples=50] plot ({0.5*sin(deg(\x))}, {\x});
            
            \node[black] at (0.7, 0) {$\Gamma(t)$};
            \node[black] at (-2,0) {$\Omega_1(t)$};
            \node[black] at (2,0) {$\Omega_2(t)$};
        \end{tikzpicture}
    \end{minipage}
    \hfill
    \begin{minipage}{0.48\textwidth}
        \centering
        \begin{tikzpicture}[scale=0.8]
            \draw[black, thick] (-3,-2) rectangle (3,2) node[right] {$\Omega$};
            \fill[gray!30, opacity=0.6] (-3,-2) rectangle (3,2);
            \fill[gray!80, opacity=0.4] plot[smooth cycle] coordinates {(-1,0) (0,1) (1,0) (0,-1)};
            \draw[black, very thick, densely dashed] plot[smooth cycle] coordinates {(-1,0) (0,1) (1,0) (0,-1)};
            
            \node[black] at (0,1.3) {$\Gamma(t)$};
            \node[black] at (0,0) {$\Omega_1(t)$};
            \node[black] at (2,1.5) {$\Omega_2(t)$};
        \end{tikzpicture}
    \end{minipage}
    \caption{Illustrations of a fixed domain $\Omega$ comprising two subdomains $\Omega_1(t)$ and $\Omega_2(t)$, separated by an evolving interface $\Gamma(t)$. Left: The interface intersects the boundary of $\Omega$. Right: The closed interface divides the domain into two subdomains, with one entirely enclosed within the other.}
    \label{fig:combined}
\end{figure}

The two subdomains $\Om_1(t)$ and $\Om_2(t)$ are filled by materials that are characterized by a non-negative coefficient $\alpha$ satisfying
\[
    \alpha \in \Cs^1\paren{\ovl{Q_1}} \cap \Cs^1\paren{\ovl{Q_2}}.
\]
To avoid the trivial case, we assume $\alpha$ not to be zero on both subdomains, i.e., $\supp(\alpha) \not\equiv \emptyset$. For all $t \in \ovl{J}$, let
\[
    \emptyset \not\equiv \Sm(t) := \supp(\alpha(\cdot, t)) \setminus (\Gm(t) \cup \pa\Om) \sst \Om.
\]
On $\Sm(t)$, the coefficient $\alpha(\cdot, t)$ is assumed to be strictly positive, i.e.,
\[
    \alpha(\xb, t) \ge \alpha_0 > 0, \qqqq \fa \xb \in \Sigma(t), \ \fa t \in \ovl{J}.
\]
As a consequence, the subdomain $\Sm(t)$ is either $\Om_1(t)$ or $\Om_2(t)$, or their union. Please keep in mind that we do not assume continuity of $\alpha$ across the interface $\Gm(t)$.

We now formulate the abstract linear (elliptic-)parabolic equation that will be the focus of our analysis. Let $\Vs \emb \Hs^1(\Om)$ be a Hilbert space such that the embedding $\Vs \emb \Ls^2(\Om)$ is dense. The dual space of $\Vs$ is denoted by $\Vs^\prime$. Hence, $\Vs \emb \Ls^2(\Om) \emb \Vs^\prime$ forms a Gelfand triple. Given $g_1 \in \Ls^2(J, \Vs^\prime)$ and a family of bounded linear operators $A(t) \in \LL(\Vs, \Vs^\prime)$ for almost all (a.a.) $t \in J$, we consider the evolution equation
\begin{equation}
\label{eq:pde}
    (\alpha \pa_t u)(t) + A(t) u(t) = g_1(t) \qqqq \text{in} \q \Vs^\prime,
\end{equation}
for a.a. $t \in J$. \Cref{eq:pde} can model a broad class of physical phenomena, including phase transition (e.g., Cahn--Hilliard and Allen--Cahn equations) \cite{GXC2025,SC2025,WWC2025}, mass transfer (e.g., Stokes and Oseen flows) \cite{Gross2011,Pruess2016,VR2018,MTZ2023}, heat transfer \cite{Slodicka2021,LSV2024}, and electromagnetics (eddy-current problems) \cite{LSV2021a,LSV2021b,LSV2022c}.

With Galerkin numerical methods in mind, our goal is to derive a variational formulation for \eqref{eq:pde} and establish its well-posedness under suitable assumptions. This is a well-established result for the case $\alpha = 1$ or when $\alpha$ is sufficiently smooth and strictly positive, see, e.g., \cite{Dautray1992,SS2009}. In such settings, the weak solution $u$ to \eqref{eq:pde} (together with an appropriate initial condition) is uniquely determined in the Sobolev--Bochner space
\[
    \Ws^{1, 2, 2}(J, \Vs, \Vs^\prime) := \brac{u \in \Ls^2(J, \Vs) : \pa_t u \in \Ls^2(J, \Vs^\prime)}.
\]
Numerous studies have demonstrated that this result remains valid in the presence of evolving subdomains and interfaces. We refer the reader to \cite{FR2017,LRS2020,Guo2021,BDV2023,NLH+2025} for further details.

When $\alpha$ is discontinuous across the evolving interface, the main challenge lies in rigorously defining $\alpha \pa_t u$ as element in a Bochner space taking values in the dual space $\Vs^\prime$, such that the corresponding Sobolev--Bochner space for the solution possesses the necessary properties for the variational analysis to be applicable. These properties include an integration by parts formula and a continuous embedding theory by which the initial value of $u$ at $t = 0$ is well-defined. In \cite{VR2018}, the authors introduced an analogue of the space $\Ws^{1, 2, 2}(J, \Vs, \Vs^\prime)$, in which the weak time derivative $\pa_t u$ is replaced by the weak material derivative $\alpha \dot{u} := \alpha \pa_t u + \alpha \vb \cdot \nabla u$. The definition of $\alpha \dot{u}$ in the distributional sense naturally generalizes that of $\pa_t u$ by taking into account the jump of $\alpha$ across the evolving interface. Nonetheless, the authors encountered a fundamental difficulty: the lack of density of smooth functions in this space, which prevented them from developing the necessary tools for the variational analysis, see \cite[Remark~2.4]{VR2018}. This issue was then addressed in \cite{LSV2024}, where the authors defined $\alpha \pa_t u \in \Ls^1(J, \Vs^\prime)$, with $\Vs = \Hs^1(\Om)$, by requiring the absolute continuity in time of the $\Ls^2(\Om)$-inner product between $\alpha u$ and $w$, for any $w \in \Hs^1(\Om)$. However, this is not a natural approach to define $\alpha \pa_t u$.

In this paper, we define $\alpha \pa_t u$ as a distribution taking values in $\Vs^\prime$ by generalizing the notion of the weak time derivative $\pa_t u$ and incorporating the evolution of the subdomains and interface via the Reynolds transport theorem. Under suitable conditions, this definition is shown to be equivalent to $\alpha \dot{u}$ introduced in \cite{VR2018}, and to the definition of $\alpha \pa_t u$ proposed in \cite{LSV2024}. To support this construction, we establish the density of smooth functions in the associated Sobolev--Bochner space for the weak solution by extending the classical mollification technique. Building on this density property, we develop the corresponding ``embedding'' theory and an integration by parts formula, which are essential in deriving the variational formulation of \eqref{eq:pde}. The well-posedness of the variational problem is then proven using the standard Banach--Ne\v{c}as--Babu\v{s}ka framework.

We also mention an alternative approach to solving \eqref{eq:pde}, which interprets the equation as being defined on two evolving subdomains $\Om_1(t)$ and $\Om_2(t)$, with transmission or jump conditions imposed at the shared interface $\Gm(t)$. Function spaces and finite element frameworks for parabolic partial differential equations (PDEs) on moving domains have been developed in the literature, which rely on the pushforward and pullback mappings $\Phi(\cdot, t)$ and $\Phi(\cdot, t)^{-1}$ to define the evolving function spaces. The interested reader is referred to \cite{AES2015,ER2021,ACD+2023,DGH2023} for more details. However, these evolving function spaces do not preserve the tensor product structure of the space-time domain $Q$. In this work, instead, we construct function spaces that maintain the tensor product structure. This approach facilitates the use of tensor-structured numerical methods for solving \eqref{eq:pde}, such as classical time-stepping finite element schemes or structured space-time finite element methods.

The paper is organized as follows. In the next section, we introduce a generalization of the standard Sobolev--Bochner spaces for the weak solution of \eqref{eq:pde}, along with their fundamental properties. In \Cref{sec:varf}, we derive a space-time variational formulation for \eqref{eq:pde}, and prove its well-posedness under appropriate assumptions. Concluding remarks and potential directions for future research are presented in \Cref{sec:conclusions}.

\section{Function spaces}

Given a Banach space $\Xs$ with its dual $\Xs^\prime$, the norm on $\Xs$ and the duality pairing on $\Xs^\prime \times \Xs$ are denoted by $\norm{\cdot}_{\Xs}$ and $\inprod{\cdot, \cdot}$, respectively. The inner product in $\Ls^2(\Om)$ is denoted by $\paren{\cdot, \cdot}$. For the sake of brevity, the dependence of functions on time and space variables is omitted if it does not cause any ambiguity. In further analysis, the following positive constants are frequently used 
\[
    C_{\vb} := \norm{\vb}_{\Cs^1\paren{\ovl{Q}}^d}, \qqqq \ C_{\alpha} := \max\brac{\norm{\alpha}_{\Cs^1\paren{\ovl{Q_1}}}, \norm{\alpha}_{\Cs^1\paren{\ovl{Q_2}}}}.
\]
The norm of the continuous embedding $\Vs \emb \Hs^1(\Om)$ is denoted by $C_{\text{emb}}$, i.e.,
\[
    \norm{u}_{\Hs^1(\Om)} \le C_{\text{emb}} \norm{u}_{\Vs}, 
    \qqqqqq \forall u \in \Vs.
\]

We recall here the Reynolds transport theorem, which is useful for handling time derivatives of integrals over evolving domains. Let 
\[
    \wt{\Om}(t) := \Om_1(t) \cup \Om_2(t), \qqqq \wt{Q} := Q_1 \cup Q_2.
\]
Given $f \in \Ws^{1,1}(\wt{Q})$, the Reynolds transport theorem \cite[p.~78]{Gurtin1981} states that 
\begin{equation}
    \label{eq:RTT}
    \dfrac{\di}{\dt} \int\limits_{\wt{\Om}(t)} f \dx = \int\limits_{\wt{\Om}(t)} \pa_t f \dx + \int\limits_{\pa\wt{\Om}(t)} f \vb \cdot \nv \ds = \int\limits_{\wt{\Om}(t)} \pa_t f \dx + \int\limits_{\wt{\Om}(t)} \dive{(f \vb)} \dx,
\end{equation}
for a.a. $t \in J$, where $\nv$ is the outward unit normal of $\pa\wt{\Om}(t)$. 

\begin{remark}
    We note that $\wt{\Om}(t)$ and $\Om$ differ only on a set of measure zero for all $t \in \overline{J}$, so do $\wt{Q}$ and $Q$. However, the material coefficient $\alpha \in \Cs^1(\wt{Q})$, while $\alpha \in \Ls^{\infty}(Q)$ only. For this reason, on some occasions in further analysis, integrals over $\Om$ will be (implicitly) reduced to those over $\wt{\Om}(t)$ to enable the application of the Reynolds transport theorem \eqref{eq:RTT}. Throughout the paper, we also adopt the convention that $\pa_t \alpha \in \Ls^\infty(Q)$ and $\nabla \alpha \in \Ls^\infty(Q)^d$, which should be understood as $\pa_t \alpha \in \Cs\paren{\ovl{Q_1}} \cap \Cs\paren{\ovl{Q_2}}$ and $\nabla \alpha \in \Cs\paren{\ovl{Q_1}}^d \cap \Cs\paren{\ovl{Q_2}}^d$.
\end{remark}

Next, we define $\alpha \pa_t u$ as a distribution taking values in $\Vs^\prime$. 

\begin{definition}
\label{def:derivative}
    Given $u \in \Ls^1(J, \Vs)$. Then, $\alpha \pa_t u \in \LL\paren{\DD(J), \Vs^\prime}$ is defined by
    \begin{equation}
    \label{eq:def}
        \inprod{(\alpha \pa_t u)(\phi), w} = - \int\limits_0^T \int\limits_\Om \paren{\alpha u w \pa_t \phi + u \phi w \pa_t \alpha + \phi \dive{\paren{\alpha u w \vb}}} \dx \dt,
    \end{equation}
    for any $\phi \in \DD(J)$ and $w \in \Vs$, where $\DD(J)$ stands for the set of infinitely differentiable functions with a compact support in $J$.
\end{definition}

The condition $\Vs \subseteq \Hs^1(\Om)$ guarantees that the integrand $\dive{(\alpha u w \vb)}$ is $\Ls^1(Q)$-integrable. One can easily see that if $u \in \Ls^1(J, \Vs)$ satisfies $\pa_t u \in \Ls^1(J, \Ls^2(\Om))$, then $\alpha \pa_t u$ defined by \eqref{eq:def} belongs to $\Ls^1(J, \Ls^2(\Om))$, and it coincides with the product between $\alpha $ and $\pa_t u$. Indeed, applying integration by parts and the Reynolds transport theorem \eqref{eq:RTT} on the right-hand side of \eqref{eq:def} leads to
\[
    \inprod{(\alpha \pa_t u)(\phi), w} = \int\limits_0^T \phi(t) \paren{\alpha(t) \pa_t u(t), w} \dt,
\]
which holds true for any $\phi \in \DD(J)$ and $w \in \Vs$. Then, invoking the density of $\Vs$ in $\Ls^2(\Om)$, we end up with the above conclusion.

Let $p \in (1, \infty)$ and $p^\prime := p/(p-1)$ its conjugate exponent. The following space is analogous to the Sobolev--Bochner space $\Ws^{1, p, p^\prime}(J, \Vs, \Vs^\prime)$ (see, e.g., \cite[Chapter~7]{Roubicek2005})
\[
    \Ws^{1, p, p^\prime}_{\alpha}(J, \Vs, \Vs^\prime) := \brac{u \in \Ls^p(J, \Vs) : \alpha \pa_t u \in \Ls^{p^\prime}(J, \Vs^\prime)},
\]
equipped with the graph norm
\[
    \norm{u}_{\Ws^{1, p, p^\prime}_{\alpha}(J, \Vs, \Vs^\prime)} := \norm{u}_{\Ls^p(J, \Vs)} + \norm{\alpha \pa_t u}_{\Ls^{p^\prime}(J, \Vs^\prime)}.
\]

Let $\DD(J, \Vs)$ be the set of infinitely differentiable functions with a compact support in $J$ and taking values in $\Vs$. The following lemma demonstrates the density of $\DD(J, \Vs)$ in the spaces $\Ws^{1, p, p^\prime}_{\alpha}(J, \Vs, \Vs^\prime)$. The proof employs the classical mollification technique, see, e.g., \cite[Lemma~7.2]{Roubicek2005} and \cite[Lemma~3.1]{LSV2024}.

\begin{lemma}
\label{lem:density}
    If $p \in [2, \infty)$, then $\DD(J, \Vs)$ is densely contained in $\Ws^{1, p, p^\prime}_{\alpha}(J, \Vs, \Vs^\prime)$.
\end{lemma}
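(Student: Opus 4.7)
The plan is to adapt the classical mollification argument for Sobolev--Bochner spaces (cf.\ \cite[Lemma~7.2]{Roubicek2005}, \cite[Lemma~3.1]{LSV2024}) to the weighted time derivative introduced in Definition~\ref{def:derivative}. Given $u \in \Ws^{1,p,p'}_\alpha(J, \Vs, \Vs^\prime)$, I would first extend $u$, $\alpha$, and $\vb$ in time past the endpoints $t = 0$ and $t = T$ onto an enlarged interval $J^* = (-\tau, T+\tau)$ in a way that preserves the subdomain/interface structure: for instance, $u$ is extended by reflection about $t=0$ and $t=T$, while the subdomains (and hence $\alpha$) are frozen outside $J$ with $\vb \equiv 0$ there. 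The extended $\alpha$ then still belongs to $\Cs^1$ on each of the two (extended) subdomains, and both the Reynolds transport theorem~\eqref{eq:RTT} and Definition~\ref{def:derivative} transfer to $J^*$.

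Next, I would convolve $u$ in time with a standard mollifier $\rho_\delta \in \Cs^\infty_c((-\delta,\delta))$ to obtain $u_\delta := \rho_\delta *_t u$, which is smooth in $t$ and takes values in $\Vs$. After restriction to $J$ and, if needed, multiplication by a cutoff $\chi_\eta \in \DD(J)$ with $\chi_\eta \equiv 1$ on $[\eta, T-\eta]$, the approximant lies in $\DD(J,\Vs)$. The convergence $u_\delta \to u$ in $\Ls^p(J,\Vs)$ as $\delta \to 0$ is standard for vector-valued mollification in reflexive Banach spaces.

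The heart of the argument is to identify $\alpha \pa_t u_\delta$ (in the sense of~\eqref{eq:def}) with a mollified version of $\alpha \pa_t u$. Starting from the right-hand side of~\eqref{eq:def} written for $u_\delta$ and applying Fubini to interchange the convolution variable $s$ with the $t$-integration, one aims at an identity of the form
\[
    \alpha \pa_t u_\delta = (\alpha \pa_t u) *_t \rho_\delta + R_\delta,
\]
where the remainder $R_\delta$ captures the non-commutation of the time mollification with the $t$-dependent coefficients $\alpha$ and $\vb$. Classical vector-valued mollification theory then gives $(\alpha \pa_t u) *_t \rho_\delta \to \alpha \pa_t u$ in $\Ls^{p'}(J, \Vs^\prime)$, and the $\Cs^1$-regularity of $\alpha$ on each $\ovl{Q_i}$, together with $\vb \in \Cs^1(\ovl{Q})^d$, yields $\norm{R_\delta}_{\Ls^{p'}(J, \Vs^\prime)} \to 0$. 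Combining these steps (via a diagonal argument in $\delta$ and, if needed, $\eta$) produces the desired approximation.

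The main obstacle will be the clean extraction and estimation of the commutator $R_\delta$: the terms $u \phi w \pa_t \alpha$ and $\phi \divt(\alpha u w \vb)$ in~\eqref{eq:def} couple space and time through coefficients that are only piecewise smooth across $\Gm(t)$, so the relevant integrals must be split over $\wt{\Om}(t) = \Om_1(t) \cup \Om_2(t)$ and then handled by the Reynolds transport theorem~\eqref{eq:RTT}, which is precisely what the definition is designed to accommodate; the jump of $\alpha$ across $\Gm(t)$ does not produce an extra boundary distribution but is absorbed by the divergence term. The hypothesis $p \ge 2$ (hence $p' \le 2$) is expected to enter the commutator estimate in a quantitative way, providing enough integrability margin on $\alpha u \in \Ls^p(J, \Ls^2(\Om))$ to control the $\delta$-dependent error terms in the graph norm.
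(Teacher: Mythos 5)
Your core strategy coincides with the paper's: mollify in time, identify $\alpha\pa_t u_\delta$ as the mollification of $\alpha\pa_t u$ plus a commutator, control the commutator through the Reynolds transport theorem and the $\Cs^1$-regularity of $\alpha$ and $\vb$ on each $\ovl{Q_i}$, and conclude by mean continuity of $u$ in $\Ls^p(J,\Vs)$ and of $\alpha\pa_t u$ in $\Ls^{p'}(J,\Vs')$. You also correctly locate where $p\ge 2$ enters (the embedding $\Ls^p(J,\Vs)\emb\Ls^{p'}(J,\Vs)$ in the commutator bound). The difference lies entirely in how the endpoints of $J$ are handled, and that is where your plan has a genuine gap.

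First, the extension step is not innocuous. To mollify an extension of $u$ you must show that the reflected function belongs to $\Ws^{1,p,p'}_{\tilde\alpha}(J^*,\Vs,\Vs')$ for the extended coefficient, i.e.\ that its weighted derivative in the sense of \eqref{eq:def} on $J^*$ has no singular part at $t=0$ and $t=T$. Verifying this requires testing against $\phi\in\DD(J^*)$ whose symmetrization $\eta(t)=\phi(t)-\phi(-t)$ vanishes at $t=0$ but is not compactly supported in $J$; passing from $\DD(J)$ to such $\eta$ needs continuity of $t\mapsto(\alpha(t)u(t),w)$ up to $t=0$, which in this weighted setting is exactly the trace information of Lemma~\ref{lem:IBP} --- itself a consequence of the density lemma you are proving. (In the classical case $\alpha=1$ one escapes this circle because $\pa_t u\in\Ls^{p'}(J,\Vs')$ directly gives absolute continuity of $u$ with values in $\Vs'$; here $\alpha\pa_t u\in\Ls^{p'}(J,\Vs')$ gives no such thing, which is the whole point of the paper.) Also, freezing the subdomains with $\vb\equiv 0$ outside $J$ breaks $\vb\in\Cs^1$ in time at the endpoints unless $\vb(\cdot,0)=\vb(\cdot,T)=0$. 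Second, the cutoff $\chi_\eta$ cannot repair the support: the extra term $\chi_\eta'\,\alpha u_\delta$ in the weighted derivative satisfies $\norm{\chi_\eta'\,\alpha u_\delta}_{\Ls^{p'}(J,\Vs')}\sim\eta^{1/p'-1}\to\infty$, so $\chi_\eta u_\delta$ does not converge to $u_\delta$ in the graph norm. The paper avoids both problems at once by replacing the centered kernel with $\rho_\veps(t+\xi_\veps(t)-s)$, $\xi_\veps(t)=\veps(T-2t)/T$, which shifts the sampling strictly into $(0,T)$ so that no extension of $u$, $\alpha$, or $\vb$ is ever needed; the price is only the harmless factor $C_\veps=(T-2\veps)/T$ and the extra term $\tfrac{2\veps}{T}\inprod{\alpha\pa_t u,z}$. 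You would need to either adopt that device or supply a noncircular proof that the reflected extension lies in the extended weighted space.
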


\begin{proof}
    Let $u \in \Ws^{1, p, p^\prime}_{\alpha}(J, \Vs, \Vs^\prime)$, with $p \in [2, \infty)$. For a sufficiently small $\veps > 0$, we set
    \[
        u_\veps(t) := \int\limits_0^T \rho_\veps\paren{t + \xi_\veps(t) - s} u(s) \ds,
    \]
    where $\rho_\veps : \R \to \R$ is the mollifier defined by
    \[
        \rho_\veps(t) :=
        \begin{cases}
            c\veps^{-1} \exp\paren{t^2/\paren{t^2-\veps^2}} \qq &\text{for } \abs{t} < \veps, \\
            0  & \text{elsewhere},
        \end{cases}
    \]
    with $c$ the constant so that $\int_\R \rho_{1}(t) \dt = 1$. The function $\xi_{\veps} : J \to \paren{-\veps, \veps}$ given by
    \[
        \xi_\veps(t) = \veps \dfrac{T - 2t}{T}
    \]
    slightly shifts the kernel in the convolution integral so that only values of $u$ inside $[0, T]$ are taken into account. It is obvious that $u_\veps \in \DD(J, \Vs)$. In addition, $u_\veps$ converges to $u$ in $\Ls^p(J, \Vs)$ when $\veps \to 0$, following the proof of \cite[Lemma~7.2]{Roubicek2005}. In the rest of the proof, we shall show that $\alpha \pa_t u_\veps$ also converges to $\alpha \pa_t u$ in $\Ls^{p^\prime}(J, \Vs^\prime)$ when $\veps \to 0$. More specifically, we need to prove that
    \begin{equation}
    \label{eq:conv_dualV}
        \lim_{\veps \to 0} \norm{\alpha \pa_t u_\veps - \alpha \pa_t u}_{\Ls^{p^\prime}(J, \Vs^\prime)} = \lim_{\veps \to 0} \, \sup_{z \in \Ls^p(J, \Vs) \setminus \brac{0}} \dfrac{\abs{\inprod{\alpha \pa_t u_\veps - \alpha \pa_t u, z}}}{\norm{z}_{\Ls^p(J, \Vs)}} = 0.
    \end{equation}
    For all $z \in \Ls^p(J, \Vs)$, we can write
    \[
        \inprod{\alpha \pa_t u, z} = \int\limits_0^T \inprod{(\alpha \pa_t u)(t), z(t)} \dt
        = \int\limits_0^T \int\limits_0^T \rho_\veps\paren{t + \xi_\veps(t) - s} \inprod{(\alpha \pa_t u)(t), z(t)} \ds \dt.
    \]
    The duality pairing between $\alpha \pa_t u_\veps$ and $z$ reads
    \begin{align*}
        \inprod{\alpha \pa_t u_\veps, z}
        & = \int\limits_0^T \paren{\alpha(t) \pa_t u_\veps(t), z(t)} \dt \\
        & = C_\veps \int\limits_0^T \int\limits_\Om \int\limits_0^T \alpha(t) \, \rho^\prime_\veps\paren{t + \xi_\veps(t) - s} u(s) \, z(t) \ds \dx \dt, 
    \end{align*}
    where $C_\veps := (T-2\veps)/T$. Note that for a fixed $t \in J, z(t) \in \Vs$ and $\phi \in \DD(J)$, with $\phi(s) := \rho_\veps\paren{t + \xi_\veps(t) - s}$. Hence, \Cref{def:derivative} can be invoked to obtain that
    \begin{align*}
    \inprod{\alpha \pa_t u_\veps, z} & = C_\veps \int\limits_0^T \int\limits_0^T  \int\limits_\Om \rho^\prime_\veps\paren{t + \xi_\veps(t) - s} \alpha(s) \, u(s) \, z(t) \dx \ds \dt \\
        & \q\, + C_\veps \int\limits_0^T \int\limits_0^T \int\limits_\Om \rho^\prime_\veps\paren{t + \xi_\veps(t) - s} \paren{\alpha(t) - \alpha(s)} u(s) \, z(t) \dx \ds \dt \\
        & = C_\veps \int\limits_0^T \int\limits_0^T \rho_\veps\paren{t + \xi_\veps(t) - s} \inprod{(\alpha \pa_s u)(s), z(t)} \ds \dt \\
        & \q\, + C_\veps \int\limits_0^T \int\limits_0^T \int\limits_{\Om} \rho_\veps\paren{t + \xi_\veps(t) - s} \pa_s \alpha(s) \, u(s) \, z(t) \dx \ds \dt \\
        & \q\, + C_\veps \int\limits_0^T \int\limits_0^T \int\limits_{\Om} \rho_\veps\paren{t + \xi_\veps(t) - s} \dive{\paren{\alpha(s) \, u(s) \, z(t) \, \vb(s)}} \dx \ds \dt \\
        & \q\, + C_\veps \int\limits_0^T \int\limits_0^T \int\limits_\Om \rho^\prime_\veps\paren{t + \xi_\veps(t) - s} \paren{\alpha(t) - \alpha(s)} u(s) \, z(t) \dx \ds \dt \\
        & =: C_\veps \paren{I_1 + I_2 + I_3 + I_4}.
    \end{align*} 
    Employing the Reynolds transport theorem \eqref{eq:RTT}, $I_4$ can be rewritten as
    \begin{align*}
        I_4 & = \int\limits_0^T \int\limits_0^T \rho_\veps^\prime\paren{t + \xi_\veps(t) - s} \paren{\,\, \int\limits_{\wt{\Om}(t)} \alpha(t) \, u(s) \, z(t) \dx - \int\limits_{\wt{\Om}(s)} \alpha(s) \, u(s) \, z(t) \dx} \ds \dt \\
        & = \int\limits_0^T \int\limits_0^T \rho_\veps^\prime\paren{t + \xi_\veps(t) - s} \int\limits_s^t \dfrac{\di}{\di \tau} \int\limits_{\wt{\Om}(\tau)} \alpha(\tau) \, u(s) \, z(t) \dx \di \tau \ds \dt \\
        & = \int\limits_0^T \int\limits_0^T \rho_\veps^\prime\paren{t + \xi_\veps(t) - s} \int\limits_s^t \int\limits_{\Om} \pa_\tau \alpha(\tau) \, u(s) \, z(t) \dx \di \tau \ds \dt \\
        & \q\, + \int\limits_0^T \int\limits_0^T \rho_\veps^\prime\paren{t + \xi_\veps(t) - s} \int\limits_s^t \int\limits_{\Om} \dive{\paren{\alpha(\tau) \, u(s) \, z(t) \, \vb(\tau)}} \dx \di \tau \ds \dt \\
        & =: I_4^a + I_4^b.
    \end{align*}
    The numerator in \eqref{eq:conv_dualV} then becomes
    \[
        \inprod{\alpha \pa_t u_\veps - \alpha \pa_t u, z}
        = C_\veps \paren{I_1 - \inprod{\alpha \pa_t u, z}} + C_\veps \paren{I_2 + I_4^a} + C_\veps \paren{I_3 + I_4^b} - \dfrac{2\veps}{T} \inprod{\alpha \pa_t u, z}.
    \]
    In the following, we estimate the bound for each term on the right-hand side. For the first term, the use of H\"{o}lder's inequality gives that
    \begin{align*}
        & \abs{I_1 - \inprod{\alpha \pa_t u, z}}^{p^\prime} \\
        & \q = \abs{\, \int\limits_0^T \int\limits_0^T \rho_\veps\paren{t + \xi_\veps(t) - s} \inprod{(\alpha \pa_t u)(t) - (\alpha \pa_s u)(s), z(t)} \ds \dt}^{p^\prime} \\
        & \q \le \paren{\, \int\limits_0^T \norm{z(t)}_{\Vs}\int\limits_0^T \rho_\veps\paren{t + \xi_\veps(t) - s} \norm{(\alpha \pa_t u)(t) - (\alpha \pa_s u)(s)}_{\Vs^\prime}  \ds \dt}^{p^\prime} \\
        & \q \le \norm{z}^{p^\prime}_{\Ls^p(J, \Vs)} \int\limits_0^T \paren{\,\, \int\limits_{t + \xi_\veps(t) - \veps}^{t + \xi_\veps(t) + \veps} \rho_\veps\paren{t + \xi_\veps(t) - s} \norm{(\alpha \pa_t u)(t) - (\alpha \pa_s u)(s)}_{\Vs^\prime} \ds}^{p^\prime} \dt \\
        & \q \le \norm{z}^{p^\prime}_{\Ls^p(J, \Vs)} \paren{\,\, \int\limits_{-\veps}^\veps \rho_\veps(h)^p \di h}^{p^\prime/p} \int\limits_0^T \int\limits_{\xi_\veps(t) - \veps}^{\xi_\veps(t) + \veps} \norm{(\alpha \pa_t u)(t + h) - (\alpha \pa_t u)(t)}_{\Vs^\prime}^{p^\prime} \di h \dt.
    \end{align*}
    Using the following estimate
       \[
            \int\limits_{-\veps}^\veps \rho_\veps(t)^p \dt = 
            \dfrac{c^p}{\veps^{p}} \int\limits_{-\veps}^\veps \exp\paren{\frac{-p t^2}{\veps^2-t^2}}\dt \le 2 \dfrac{c^p}{\veps^{p-1}} =: \frac{C_1^p}{(2\veps)^{p-1}}, 
        \]
    we arrive at 
    \begin{align*}
        \abs{I_1 - \inprod{\alpha \pa_t u, z}}^{p^\prime} & \le \dfrac{C_1^{p^\prime}}{2\veps} \norm{z}^{p^\prime}_{\Ls^p(J, \Vs)} \int\limits_0^T \int\limits_{\xi_\veps(t) - \veps}^{\xi_\veps(t) + \veps} \norm{(\alpha \pa_t u)(t + h) - (\alpha \pa_t u)(t)}_{\Vs^\prime}^{p^\prime} \di h \dt \\
         & \le C_1^{p^\prime} \norm{z}^{p^\prime}_{\Ls^p(J, \Vs)} \sup_{\abs{h} \leq 2 \veps} \int\limits_0^T \norm{(\alpha \pa_t u)(t + h) - (\alpha \pa_t u)(t)}_{\Vs^\prime}^{p^\prime} \dt.
    \end{align*}
    Next, integrating by parts with respect to $s$, we rewrite $I_3$ as
    \begin{align*}
        I_3 & = \int\limits_0^T \int\limits_0^T \rho_\veps\paren{t + \xi_\veps(t) - s} \int\limits_{\Om} \dive{\paren{\alpha(s) \, u(s) \, z(t) \, \vb(s)}} \dx \ds \dt \\
        & = - \int\limits_0^T \int\limits_0^T \rho_\veps\paren{t + \xi_\veps(t) - s} \dfrac{\di}{\ds} \int\limits_s^t \int\limits_{\Om} \dive{(\alpha(\tau) \, u(\tau) \, z(t) \, \vb(\tau))} \dx \di\tau \ds \dt \\
        & = - \int\limits_0^T \int\limits_0^T \rho^\prime_\veps\paren{t + \xi_\veps(t) - s} \int\limits_s^t \int\limits_{\Om} \dive{(\alpha(\tau) \, u(\tau) \, z(t) \, \vb(\tau))} \dx \di\tau \ds \dt,
    \end{align*}
    which allows to deduce the following
    \begin{align*} 
        & \abs{I_3 + I_4^b}^{p^\prime} \\
        & \,\,\, = \abs{\, \int\limits_0^T \int\limits_0^T \rho_\veps^\prime\paren{t + \xi_\veps(t) - s} \int\limits_s^t \int\limits_{\Om} \dive{\left[\alpha(\tau) \paren{u(s) - u(\tau)} z(t) \, \vb(\tau)\right]}
        \dx \di \tau \ds \dt}^{p^\prime} \\
        & \,\,\, \le C^{p^\prime}_{\alpha} C^{p^\prime}_{\vb} \paren{\, \int\limits_0^T \norm{z(t)}_{\Hs^1(\Om)} \int\limits_0^T \abs{\rho_\veps^\prime\paren{t + \xi_\veps(t) - s}} \abs{\int\limits_s^t \norm{u(s) - u(\tau)}_{\Hs^1(\Om)} \di\tau} \ds \dt}^{p^\prime} \\
        & \,\,\, \le C_2 \norm{z}^{p^\prime}_{\Ls^p(J, \Vs)} \int\limits_0^T \paren{\, \int\limits_{\xi_\veps(t) - \veps}^{\xi_\veps(t) + \veps} \abs{\rho_\veps^\prime\paren{h - \xi_\veps(t)}} \abs{\int\limits_0^h \norm{u(t + h) - u(t + \eta)}_{\Vs} \di\eta} \di h}^{p^\prime} \dt \\
        & \,\,\, \le C_2 \norm{z}^{p^\prime}_{\Ls^p(J, \Vs)} \abs{\, \int\limits_{-\veps}^\veps \abs{\rho_\veps^\prime(h)}^p \di h}^{p^\prime/p} \int\limits_0^T \int\limits_{\xi_\veps(t) - \veps}^{\xi_\veps(t) + \veps} \, \abs{\int\limits_0^h \norm{u(t + h) - u(t + \eta)}_{\Vs} \di\eta}^{p^\prime} \di h \dt,
    \end{align*}
    where $ C_2 := C^{p^\prime}_{\alpha} C^{p^\prime}_{\vb} C_{\text{emb}}^{2p^\prime}$. Note that 
    \[
            \int\limits_{-\veps}^\veps \abs{\rho^\prime_\veps(t)}^p \dt = 
            (2c\veps)^p \int\limits_{-\veps}^\veps \frac{\abs{t}^p\exp\paren{p t^2/\paren{t^2-\veps^2}}}{\paren{\veps^2 - t^2}^{2p}} \dt  
            = \dfrac{2^{p+1} c^p C_p} {\veps^{2p-1}} = \dfrac{8^{p} c^p C_p} {(2\veps)^{2p-1}},
        \]
    where 
        \[
            C_p := \int\limits_0^1 \frac{t^p\exp\paren{p t^2/\paren{t^2-1}}}{\paren{1 - t^2}^{2p}} \dt < \infty.
        \] 
    Taking into account this identity and the continuous embedding $\Ls^p(J, \Vs) \emb \Ls^{p^\prime}(J, \Vs)$, which is valid for $p \in [2, \infty)$ and has the norm $T^{(p-2)/p}$, we obtain
    \begin{align*}
        & \abs{I_3 + I_4^b}^{p^\prime} \\
            & \, \le \dfrac{C_3}{(2\veps)^{(2p - 1)/(p-1)}} \norm{z}^{p^\prime}_{\Ls^p(J, \Vs)} \int\limits_0^T \int\limits_{\xi_\veps(t) - \veps}^{\xi_\veps(t) + \veps} \abs{h}^{p^\prime/p} \int\limits_{\xi_\veps(t) - \veps}^{\xi_\veps(t) + \veps} \norm{u(t + h) - u(t + \eta)}^{p^\prime}_{\Vs} \di\eta \di h  \dt \\
            & \, \le \dfrac{C_3}{(2\veps)^2} \norm{z}^{p^\prime}_{\Ls^p(J, \Vs)} \int\limits_0^T \int\limits_{\xi_\veps(t) - \veps}^{\xi_\veps(t) + \veps} \,\, \int\limits_{\xi_\veps(t) - \veps}^{\xi_\veps(t) + \veps} \norm{u(t + h) - u(t + \eta)}^{p^\prime}_{\Vs} \di\eta \di h \dt \\
            & \, \le C_4^{p^\prime} \norm{z}^{p^\prime}_{\Ls^p(J, \Vs)} \sup_{\abs{h} \le 2\veps} \sup_{\abs{\eta} \le 2 \veps} \paren{\int\limits_0^T \norm{u(t + h) - u(t + \eta)}^{p}_{\Vs} \dt}^{p^\prime/p},
    \end{align*}
    where $C_3 := C_2 \paren{8^{p}c^p C_p}^{1/(p-1)}$ and $C_4 := C_3^{1/p^\prime} T^{(p-2)/p}$. Analogously, there exists a constant $C_5 > 0$ independent of $\veps$ such that
    \[
        \abs{I_2 + I_4^a}^{p^\prime} \le C_5^{p^\prime} \norm{z}^{p^\prime}_{\Ls^p(J, \Vs)} \sup_{\abs{h} \le 2\veps} \sup_{\abs{\eta} \le 2 \veps} \paren{\int\limits_0^T \norm{u(t + h) - u(t + \eta)}^{p}_{\Vs} \dt}^{p^\prime/p}.
    \]
    The last term is clearly bounded by
    \[
        \dfrac{2\veps}{T} \abs{\inprod{\alpha \pa_t u, z}} \le \dfrac{2\veps}{T} \norm{\alpha \pa_t u}_{\Ls^{p^\prime}(J, \Vs^\prime)} \norm{z}_{\Ls^p(J, \Vs)}.
    \]
    Therefore, we end up with
    \begin{align*}
        & \norm{\alpha \pa_t u_\veps - \alpha \pa_t u}_{\Ls^{p^\prime}(J, \Vs^\prime)} \le C_1 \sup_{\abs{h} \leq 2 \veps} \paren{\int\limits_0^T \norm{(\alpha \pa_t u)(t + h) - (\alpha \pa_t u)(t)}_{\Vs^\prime}^{p^\prime} \dt}^{1/p^\prime}  \\
        & \q + \paren{C_4 + C_5} \sup_{\abs{h} \le 2\veps} \, \sup_{\abs{\lambda} \le 2\veps} \paren{\int\limits_0^T \norm{u(t + h) - u(t + \lambda)}^p_{\Vs} \dt}^{1/p} + \dfrac{2\veps}{T} \norm{\alpha \pa_t u}_{\Ls^{p^\prime}(J, \Vs^\prime)}.
    \end{align*}
    Taking into limit $\veps \to 0$ and invoking the mean continuity of $u$ and $\alpha \pa_t u$ as elements in $\Ls^p(J, \Vs)$ and $\Ls^{p^\prime}(J, \Vs^\prime)$, respectively, we arrive at \eqref{eq:conv_dualV}, and hence complete the proof.
\end{proof}

\begin{remark}
    In the case $\alpha = 1$, the distribution $\alpha \pa_t u$ defined by \eqref{eq:def} coincides with the weak time derivative $\pa_t u$ when $\vb \cdot \nv = 0$ on the boundary $\pa\Om$ (including the static case, i.e., $\vb = \zrb$ in $Q$), or $\left.u\right|_{\pa\Om} = 0$. These settings have been thoroughly investigated in \cite{Gross2011} and \cite{NLH+2025}, where the use of the standard Sobolev--Bochner space $\Ws^{1, 2, 2}(J, \Vs, \Vs^\prime)$ for the weak solution is justified. 
    In more general situations, especially when $\alpha$ has jump at the evolving interface $\Gm(t)$, the condition $\pa_t u \in \Ls^1(J, \Vs^\prime)$ does not imply the well-definedness of $\alpha \pa_t u$ in the same space or in any space of distributions that take values in $\Vs^\prime$. Additional terms arise in the definition \cref{eq:def} as a result of the Reynolds transport theorem \eqref{eq:RTT}. Without these terms, the defined $\alpha \pa_t u$ would not match the actual product of $\alpha$ and $\pa_t u$ when $u$ is sufficiently regular -- for instance, if $u$ satisfies $\pa_t u \in \Ls^1(J, \Ls^2(\Om))$. However, incorporating these terms requires the inclusion $\Vs \subseteq \Hs^1(\Om)$. Furthermore, the proof of the density of $\DD(J, \Vs)$ in the corresponding Sobolev--Bochner spaces $\Ws^{1, p, p^\prime}_{\alpha}(J, \Vs, \Vs^\prime)$ (cf. Lemma~\ref{lem:density}) relies on the continuous embeddings $\Vs \emb \Hs^1(\Om)$ and $\Ls^p(J, \Vs) \emb \Ls^{p^\prime}(J, \Vs)$ -- the latter of which fails when $p \in (1, 2).$
\end{remark}

\begin{remark}
    \Cref{def:derivative} of $\alpha \pa_t u$ is equivalent to the following definition of the material derivative $\alpha \dot{u} \in \LL(\DD(J), \Vs^\prime)$
    \[
        \inprod{(\alpha \dot{u})(\phi), w} = - \int\limits_0^T \int\limits_\Om \paren{ \alpha u \dot{(\phi w)} + u \phi w \pa_t \alpha + \phi u w \dive{\paren{\alpha \vb}}} \dx \dt,
    \]
    for any $\phi \in \DD(J)$ and $w \in \Vs$, where $\dot{(\phi w)} = w \pa_t \phi + \phi \vb \cdot \nabla w$. This definition coincides with the definition in \cite[Eq.~(2.5)]{VR2018} when $\alpha$ is constant on each subdomain and $\dive{\vb} = 0$ in $Q$. 
    
    \Cref{def:derivative} also generalizes the definition of $\alpha \pa_t u$ in \cite{LSV2024}. Indeed, let us assume that the assumptions of \cite[Definition~3.1]{LSV2024} are satisfied, i.e., $u \in \Ls^1(J, \Vs)$ and there exists $g \in \Ls^1(J, \Vs^\prime)$ such that
    \[
        \dfrac{\di}{\dt} \paren{\alpha(t) u(t), w} = \inprod{g(t), w},
    \]
    for any $w \in \Vs$ and for a.a. $t \in J$. In addition, we assume that the material coefficient $\alpha$ is piecewise-constant. Then, the definition \eqref{eq:def} turns out to be
    \[
        \inprod{(\alpha \pa_t u)(\phi), w} = \int\limits_0^T \phi(t) \paren{\inprod{g(t), w} - \int\limits_{\Om} \alpha(t) \dive{\paren{u(t) w \vb(t)}} \dx} \dt,
    \]
    which is valid for any $\phi \in \DD(J)$ and $w \in \Vs$. By a density argument, we can conclude that $\alpha \pa_t u \in \Ls^1(J, \Vs^\prime)$ and 
    \[
        \inprod{(\alpha \pa_t u)(t), w} = \inprod{g(t), w} - \int\limits_{\Om} \alpha(t) \dive{\paren{u(t) w \vb(t)}} \dx,
    \]
    for a.a. $t \in J$. This relation gives us back the definition of $\alpha \pa_t u$ in \cite[Definition~3.1]{LSV2024}.
\end{remark}

It is noteworthy that the space $\Ws^{1, p, p^\prime}_{\alpha}(J, \Vs, \Vs^\prime)$ is not embedded in $\Cs\paren{\overline{J}, \Ls^2(\Om)}$. Nevertheless, the following lemma establishes an analogous result that serves as a substitute for this lack of continuous embedding.

\begin{lemma}
\label{lem:IBP}
    Given $p \in [2, \infty)$. If $u \in \Ws^{1, p, p^\prime}_{\alpha}(J, \Vs, \Vs^\prime)$, then $\sqrt{\alpha} u \in \Cs\paren{\ovl{J}, \Ls^2(\Om)}$. In addition, there exists a constant $\wt{C}_{\textup{emb}, p} > 0$ such that 
    \[
        \norm{\sqrt{\alpha} u}_{\Cs\paren{\ovl{J}, \Ls^2(\Om)}} \le \wt{C}_{\textup{emb}, p} \norm{u}_{\Ws^{1, p, p^\prime}_{\alpha}(J, \Vs, \Vs^\prime)},
    \]
    for any $u \in \Ws^{1, p, p^\prime}_{\alpha}(J, \Vs, \Vs^\prime)$. Moreover, the following integration by parts formula holds for any $u, z \in \Ws^{1, p, p^\prime}_{\alpha}(J, \Vs, \Vs^\prime)$ and any $t_1, t_2 \in \ovl{J}$
    \begin{multline}
        \label{eq:IBP}
         \int\limits_{t_1}^{t_2} \inprod{(\alpha \pa_t u)(t), z(t)} + \inprod{(\alpha \pa_t z)(t), u(t)} \dt = \paren{(\sqrt{\alpha} u)(t_2), (\sqrt{\alpha} z)(t_2)} \\
         - \paren{(\sqrt{\alpha} u)(t_1), (\sqrt{\alpha} z)(t_1)} - \int\limits_{t_1}^{t_2} \int\limits_\Om \paren{u z \pa_t \alpha + \dive{\paren{\alpha u z \vb}}} \dx \dt.
    \end{multline}
\end{lemma}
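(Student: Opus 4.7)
The plan is to establish \eqref{eq:IBP} and the bound first on the dense subspace $\DD(J, \Vs)$ of $\Ws^{1,p,p^\prime}_{\alpha}(J, \Vs, \Vs^\prime)$ provided by \Cref{lem:density}, and then extend by density. For $u, z \in \DD(J, \Vs)$, the classical time derivatives $\pa_t u$ and $\pa_t z$ belong to $\Ls^1(J, \Ls^2(\Om))$, so by the observation following \Cref{def:derivative} the duality pairings reduce to $\Ls^2(\Om)$-inner products and
\[
    \inprod{(\alpha \pa_t u)(t), z(t)} + \inprod{(\alpha \pa_t z)(t), u(t)} = \int\limits_\Om \alpha\, \pa_t(uz) \dx.
\]
Applying the Reynolds transport theorem \eqref{eq:RTT} to $f = \alpha u z \in \Ws^{1,1}(\wt{Q})$ and rearranging terms yields
\[
    \int\limits_\Om \alpha\, \pa_t(uz) \dx = \dfrac{\di}{\dt}\paren{\sqrt{\alpha}\, u(t),\, \sqrt{\alpha}\, z(t)} - \int\limits_\Om uz\, \pa_t\alpha \dx - \int\limits_\Om \dive{\paren{\alpha uz \vb}} \dx.
\]
Integrating this identity from $t_1$ to $t_2$ produces exactly \eqref{eq:IBP} on the smooth subspace.

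Taking $z = u$ in this smooth identity and estimating the right-hand side with the Cauchy--Schwarz and H\"older inequalities, together with the constants $C_{\alpha}, C_{\vb}, C_{\text{emb}}$, gives
\[
    \smnorm{\sqrt{\alpha}\, u(t_2)}_{\Ls^2(\Om)}^2 \le \smnorm{\sqrt{\alpha}\, u(t_1)}_{\Ls^2(\Om)}^2 + C\, \smnorm{u}_{\Ws^{1,p,p^\prime}_{\alpha}(J, \Vs, \Vs^\prime)}^2,
\]
with $C$ independent of $t_1, t_2 \in \ovl{J}$. Averaging over $t_1 \in J$ and using the continuous embedding $\Ls^p(J, \Vs) \emb \Ls^2(J, \Vs)$ (available for $p \ge 2$) yields the uniform bound $\smnorm{\sqrt{\alpha}\, u(t_2)}_{\Ls^2(\Om)} \le \wt{C}_{\textup{emb}, p}\, \smnorm{u}_{\Ws^{1,p,p^\prime}_{\alpha}}$ valid for all $t_2 \in \ovl{J}$. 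To upgrade this to continuity of $t \mapsto \sqrt{\alpha(\cdot, t)}\, u(\cdot, t)$ in $\Ls^2(\Om)$ for smooth $u$, I split $\sqrt{\alpha(t_2)}u(t_2) - \sqrt{\alpha(t_1)}u(t_1) = \sqrt{\alpha(t_2)}(u(t_2) - u(t_1)) + (\sqrt{\alpha(t_2)} - \sqrt{\alpha(t_1)})u(t_1)$: the first summand tends to zero in $\Ls^2(\Om)$ by smoothness of $u$, and the second one by dominated convergence, since $t \mapsto \alpha(\xb, t)$ is continuous at $t_1$ for every $\xb \in \Om \setminus \Gm(t_1)$ (by the diffeomorphism property of $\Phi$ together with $\alpha \in \Cs^1(\ovl{Q_1}) \cap \Cs^1(\ovl{Q_2})$), $\Gm(t_1)$ has Lebesgue measure zero, and the integrand is majorised by $4 \smnorm{\alpha}_{\Ls^\infty(Q)}\, u(t_1)^2$.

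For a general $u \in \Ws^{1,p,p^\prime}_{\alpha}(J, \Vs, \Vs^\prime)$, pick $(u_n) \sst \DD(J, \Vs)$ converging to $u$ in $\Ws^{1,p,p^\prime}_{\alpha}$ via \Cref{lem:density}. Applying the uniform bound to $u_n - u_m$ shows that $(\sqrt{\alpha}\, u_n)$ is Cauchy in $\Cs(\ovl{J}, \Ls^2(\Om))$; extracting a subsequence that converges almost everywhere on $Q$ identifies the limit with $\sqrt{\alpha}\, u$, proving $\sqrt{\alpha}\, u \in \Cs(\ovl{J}, \Ls^2(\Om))$ together with the stated norm inequality. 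Writing \eqref{eq:IBP} for approximating pairs $(u_n, z_n)$ and passing $n \to \infty$---using the $\Cs(\ovl{J}, \Ls^2(\Om))$-convergence of the boundary traces, H\"older on the duality pairings, and $\Ls^2(Q)$-convergence of the volume integrands---then extends the formula to the whole space. The main obstacle in this programme is the continuity assertion for smooth $u$: although $u$ itself is $\Cs^\infty$, the coefficient $\alpha$ is discontinuous in time at any spatial point swept by the moving interface, so the $\Ls^2(\Om)$-continuity of $\sqrt{\alpha}\, u$ is not free and depends essentially on $\Gm(t)$ having measure zero at every $t$ and on the regularity of $\Phi$; everything else reduces to careful bookkeeping with Reynolds transport and H\"older estimates.
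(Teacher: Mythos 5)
Your proof is correct and follows essentially the same route as the paper's: establish \eqref{eq:IBP} for smooth functions via the Reynolds transport theorem \eqref{eq:RTT}, obtain the uniform bound by taking $z=u$ with an averaged (or, in the paper, mean-value) choice of $t_1$, and extend everything by the density provided by \Cref{lem:density}. Your explicit verification that $\sqrt{\alpha}\,u \in \Cs\paren{\ovl{J}, \Ls^2(\Om)}$ already holds for smooth $u$ (via dominated convergence off the measure-zero interface $\Gm(t_1)$) fills in a detail the paper leaves implicit, but does not alter the argument.
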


\begin{proof}
    We note that \eqref{eq:IBP} holds for any $u, z \in \Cs^1\paren{\ovl{J}, \Vs}$ following the Reynolds transport theorem \eqref{eq:RTT}. For any $t \in \ovl{J}$ and $u \in \Cs^1\paren{\ovl{J}, \Vs}$, we can use the identity \eqref{eq:IBP} with $z = u, t_2 = t$, and $t_1 \in \ovl{J}$ satisfying $\norm{u(t_1)}^2_{\Ls^2(\Om)} = \tfrac{1}{T} \norm{u}^2_{\Ls^2(J, \Ls^2(\Om))}$ (such $t_1$ exists by the mean value theorem) to see that
    \begin{align*}
        & \norm{(\sqrt{\alpha} u)(t)}^2_{\Ls^2(\Om)} \\
        & \q = \norm{(\sqrt{\alpha} u)(t_1)}^2_{\Ls^2(\Om)} + \paren{\norm{(\sqrt{\alpha} u)(t)}^2_{\Ls^2(\Om)} - \norm{(\sqrt{\alpha} u)(t_1)}^2_{\Ls^2(\Om)}} \\
        & \q \le C_\alpha \norm{u(t_1)}^2_{\Ls^2(\Om)} + 2\abs{ \,\, \int\limits_{t_1}^{t} \inprod{\alpha \pa_s u, u} \ds} + \abs{\,\, \int\limits_{t_1}^{t} \int\limits_\Om \paren{u^2 \pa_s \alpha + \dive{\paren{\alpha u^2 \vb}}} \dx \ds} \\
        & \q \le C_{\alpha} T^{-1} \norm{u}^2_{\Ls^2(J, \Ls^2(\Om))} + 2 \norm{\alpha \pa_t u}_{\Ls^{p^\prime}(J, \Vs^\prime)} \norm{u}_{\Ls^p(J, \Vs)} + C_{\alpha} \paren{C_{\vb} + 1} \norm{u}^2_{\Ls^2(J, \Hs^1(\Om))}\\
        & \q \le \wt{C}^2_{\textup{emb}, p} \norm{u}^2_{\Ls^p(J, \Vs)} + \norm{\alpha \pa_t u}^2_{\Ls^{p^\prime}(J, \Vs^\prime)} \\
        & \q \le \wt{C}^2_{\textup{emb}, p} \norm{u}^2_{\Ws^{1, p, p^\prime}_{\alpha}(J, \Vs, \Vs^\prime)},
    \end{align*}
    where the constant
    \begin{equation}
    \label{eq:constant}
        \wt{C}_{emb, p} := \sqrt{ \paren{C_{\vb} + 1 + T^{-1}} C_{\alpha} C_{\text{emb}}^2 T^{(p-2)/p} + 1}.
    \end{equation}
    In the second last estimate, we have employed the continuous embeddings $\Vs \emb \Hs^1(\Om)$ and $\Ls^p(J, \Vs) \emb \Ls^2(J, \Vs)$, with norms $C_{emb}$ and $T^{(p-2)/2p}$, respectively. The proof is then concluded by invoking the density of $\Cs^1\paren{\ovl{J}, \Vs}$ in $\Ws^{1, p, p^\prime}_{\alpha}(J, \Vs, \Vs^\prime)$, as a consequence of Lemma~\ref{lem:density}.
\end{proof}

\begin{remark}
\label{rem:supp}
    In the space $\Ls^2(\Sm(t))$, the two norms $\norm{\cdot}$ and $\norm{\sqrt{\alpha(t)} \cdot}$ are equivalent, i.e.,
    \[
        \sqrt{\alpha_0} \norm{u}_{\Ls^2(\Sm(t))} \le \norm{\sqrt{\alpha(t)} u}_{\Ls^2(\Sm(t))} \le \sqrt{C_{\alpha}} \norm{u}_{\Ls^2(\Sm(t))}, \qqq \fa u \in \Ls^2(\Sm(t)).
    \]
    Lemma~\ref{lem:IBP} implies that if $u \in \Ws^{1, p, p^\prime}_{\alpha}(J, \Vs, \Vs^\prime)$, with $p \in [2, \infty)$, then $u(t) \in \Ls^2(\Sm(t))$ for all $t \in \ovl{J}$. In addition, it holds that
    \begin{equation}
    \label{eq:embedding}
        \max\limits_{t \in \ovl{J}} \norm{u(t)}_{\Ls^2(\Sm(t))} \le \dfrac{\wt{C}_{\textup{emb}, p}}{\sqrt{\alpha_0}} \norm{u}_{\Ws^{1, p, p^\prime}_{\alpha}(J, \Vs, \Vs^\prime)}.
    \end{equation}
\end{remark}

\section{Variational formulation}
\label{sec:varf}

This section is devoted to deriving the variational formulation for \eqref{eq:pde} using the function spaces $\Ws^{1, p, p^\prime}_{\alpha}(J, \Vs, \Vs^\prime)$ and establishing its well-posedness. Equation \eqref{eq:pde} must be complemented with an initial condition. Based on the arguments in Lemma~\ref{lem:IBP} and Remark~\ref{rem:supp}, the initial condition shall be given by
\begin{equation}
\label{eq:IC}
    u(0) = g_2 \qqqqqqqq \text{in} \q \Ls^2(\Sm_0),
\end{equation}
where $\Sm_0 := \Sm(0)$. We denote the Banach spaces
\[
    \XX := \Ws^{1, 2, 2}_{\alpha}(J, \Vs, \Vs^\prime), \qqqqq \YY := \Ls^2(J, \Vs) \times \Ls^2(\Sm_0),
\]
equipped with the norms
\[
    \norm{u}^2_{\XX} :=\norm{u}^2_{\Ls^2(J, \Vs)} + \norm{\alpha \pa_t u}^2_{\Ls^2(J, \Vs^\prime)}, \qqq \norm{y}_{\YY} := \norm{y_1}_{\Ls^2(J, \Vs)} + \norm{y_2}_{\Ls^2(\Sm_0)},
\]
where $y = \paren{y_1, y_2} \in \YY$.
Given $g_1 \in \Ls^2(J, \Vs^\prime), g_2 \in \Ls^2(\Sm_0)$, and $A(t) \in \LL(\Vs, \Vs^\prime)$ such that $J \ni s \mapsto \inprod{A(s) u, w}$ is Lebesgue measurable for all $u, w \in \Vs$, the space-time variational formulation of \eqref{eq:pde} and \eqref{eq:IC} reads: find $u \in \XX$ such that
\begin{equation}
\label{eq:vf}
    B(u, y) = f(y) \qqqqqq \fa y := (y_1, y_2) \in \YY,
\end{equation}
where the bilinear form $B : \XX \times \YY \to \R$ is defined by
\[
    B(u, y) := \int\limits_0^T \inprod{(\alpha \pa_t u)(t), y_1(t)} + \inprod{A(t) u(t), y_1(t)} \dt + \paren{u(0), y_2},
\]
and the linear form $f : \YY \to \R$ is given by
\[
    f(y) := \int\limits_0^T \inprod{g_1(t), y_1(t)} \dt + \paren{g_2, y_2}.
\]
The inner product in $\Ls^2(\Sm_0)$ is also denoted by $\paren{\cdot, \cdot}$. To establish the well-posedness of \eqref{eq:vf}, we make the following assumptions.
\begin{assumption}
\label{asmp:A}
    There exists a triple of real constants $\paren{c_A, C_A, \lambda_0}$ such that $0 < c_A \le C_A$ and for a.a. $t \in J$, the operator $A(t) \in \LL(\Vs, \Vs^\prime)$ satisfies
    \begin{enumerate}
        \item Boundedness: 
        \begin{equation}
        \label{eq:boundedness}
            \abs{\inprod{A(t) w, \psi}} \le C_A \norm{w}_{\Vs} \norm{\psi}_{\Vs}, \qqqqqqqqq \q \fa w, \psi \in \Vs.
        \end{equation}
        \item Coercivity:
        \begin{equation}
        \label{eq:coercivity}
            \inprod{A(t) w, w} + \paren{\lambda_0 - \lambda_1} \norm{\sqrt{\alpha(t)} w}^2_{\Ls^2(\Om)} \ge c_A \norm{w}^2_{\Vs}, \qq \fa w \in \Vs,
        \end{equation}
        with 
        \begin{equation}
        \label{eq:lambda1}
            \lambda_1 := \dfrac{C^2_{\alpha} C^2_{\textup{emb}} (C_{\vb} + 1)^2}{2 c_A \alpha_0}.
        \end{equation}
    \end{enumerate}
\end{assumption}

\begin{remark}
The conditions \eqref{eq:boundedness} and \eqref{eq:coercivity} are standard assumptions for establishing the well-posedness of the variational formulation of parabolic problems (see, e.g., \cite{SS2009,Andreev2013} for the classical case $\alpha = 1$ and $\vb = \zrb$). The coercivity assumption \eqref{eq:coercivity} indicates that $A(t)$ need not be $\Vs$-elliptic to ensure the well-posedness of \eqref{eq:vf}. The constant $\lambda_1$, which could be incorporated into $\lambda_0$, is introduced to handle additional terms arising due to the Reynolds transport theorem. 
\end{remark}

\begin{remark}
\label{rem:lambda}
    In the subsequent analysis of the well-posedness, we may set $\lambda_0 = 0$ in \eqref{eq:coercivity} without loss of generality. Indeed, following the proof of \cite[Theorem~5.1]{SS2009}, we write $u(t) = \hat{u}(t) e^{\lambda_0 t}, y_1(t) = \check{y}_1(t) e^{-\lambda_0 t}$ and $g_1(t) = \hat{g}_1(t) e^{\lambda_0 t}$. A straightforward calculation shows that $u$ solves \eqref{eq:vf} if and only if $\hat{u}$ solves
    \begin{equation}
    \label{eq:vf2}
        \wh{B}(\hat{u}, \check{y}) = \hat{f}(\check{y}) \qqqqqq \fa \check{y} = (\check{y}_1, y_2) \in \YY,
    \end{equation}
    where
    \[
        \wh{B}(\hat{u}, \check{y}) := \int\limits_0^T \inprod{(\alpha \pa_t \hat{u})(t), \check{y}_1(t)} + \inprod{\wh{A}(t) \hat{u}(t), \check{y}_1(t)} \dt + \paren{\hat{u}(0), y_2},
    \]
    and
    \[
        \hat{f}(\check{y}) = \int\limits_0^T \inprod{\hat{g}_1(t), \check{y}_1(t)} \dt + \paren{g_2, y_2}.
    \]
    The operator $\wh{A}(t) \in \LL(\Vs, \Vs^\prime)$ is defined for a.a. $t\in J$ by
    \[
        \inprod{\wh{A}(t) w, \psi} = \lambda_0 \paren{\alpha(t) w, \psi} + \inprod{A(t) w, \psi} \qqqqqq \fa w, \psi \in \Vs.
    \]
    One can easily verify that the operator $\wh{A}(t)$ satisfies \Cref{asmp:A} with the triplet $\paren{c_A, C_A + \abs{\lambda_0} C_{\alpha} C_{\text{emb}}^2, 0}$.
\end{remark}

Next, we prove the well-posedness of the variational problem \eqref{eq:vf} following the Banach--Ne\v{c}as--Babu\v{s}ka theorem, see \cite[Theorem~25.9]{Ern2021b}.

\begin{lemma}[Inf-sup condition]
\label{lem:infsup}
    There exists a constant $c_B > 0$ such that
    \[
        \sup_{y \in \YY \setminus \brac{0}} \dfrac{B(u, y)}{\norm{y}_{\YY}} \ge c_B \norm{u}_{\XX} \qqqqqqq \fa u \in \XX.
    \]
\end{lemma}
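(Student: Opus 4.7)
The plan is to construct, for each $u \in \XX$, a test pair $y = (y_1, y_2) \in \YY$ realising $B(u, y) \ge c_1 \norm{u}_{\XX}^2$ and $\norm{y}_{\YY} \le c_2 \norm{u}_{\XX}$, from which the inf-sup inequality follows with $c_B = c_1 / c_2$. By \Cref{rem:lambda}, I may assume $\lambda_0 = 0$, in which case the coercivity estimate \eqref{eq:coercivity} reads $\inprod{A(t) w, w} \ge c_A \norm{w}_{\Vs}^2 + \lambda_1 \norm{\sqrt{\alpha(t)}\, w}_{\Ls^2(\Om)}^2$, providing a positive reserve proportional to $\lambda_1$ that will be consumed by the residual of the Reynolds transport theorem.

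First, I would test against $y_1 = u$ and $y_2 = \mu\, u(0)$ for a sufficiently large $\mu > 0$. The integration by parts formula \eqref{eq:IBP} with $z = u$ and $(t_1, t_2) = (0, T)$ gives
\[
    \int\limits_0^T \inprod{(\alpha \pa_t u)(t), u(t)} \dt = \tfrac{1}{2} \norm{(\sqrt{\alpha}\, u)(T)}_{\Ls^2(\Om)}^2 - \tfrac{1}{2} \norm{(\sqrt{\alpha}\, u)(0)}_{\Ls^2(\Om)}^2 - \tfrac{1}{2} R(u),
\]
with $R(u) := \int_0^T \int_\Om \paren{u^2 \pa_t \alpha + \dive{(\alpha u^2 \vb)}} \dx \dt$. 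Combined with the modified coercivity and the pointwise bound $\norm{(\sqrt{\alpha}\, u)(0)}_{\Ls^2(\Om)}^2 \le C_\alpha \norm{u(0)}_{\Ls^2(\Sm_0)}^2$, $B(u, y)$ dominates
\[
    c_A \norm{u}_{\Ls^2(J, \Vs)}^2 + \lambda_1 \int\limits_0^T \norm{\sqrt{\alpha(t)}\, u(t)}_{\Ls^2(\Om)}^2 \dt + \tfrac{1}{2} \norm{(\sqrt{\alpha}\, u)(T)}_{\Ls^2(\Om)}^2 + \paren{\mu - \tfrac{C_\alpha}{2}} \norm{u(0)}_{\Ls^2(\Sm_0)}^2 - \tfrac{1}{2} \abs{R(u)}.
\]
Expanding $\dive{(\alpha u^2 \vb)}$, exploiting that $\pa_t \alpha$ and $\nabla \alpha$ vanish outside $\supp(\alpha)$ so that $u^2 \le \alpha u^2 / \alpha_0$ on their support, and applying Young's inequality to the mixed term $2 \alpha u\, \vb \cdot \nabla u$ together with $\Vs \emb \Hs^1(\Om)$, I expect to arrive at $\tfrac{1}{2} \abs{R(u)} \le \tfrac{c_A}{2} \norm{u}_{\Ls^2(J, \Vs)}^2 + \lambda_1 \int_0^T \norm{\sqrt{\alpha}\, u}_{\Ls^2(\Om)}^2 \dt$. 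The precise form \eqref{eq:lambda1} of $\lambda_1$ is chosen exactly so that this balance works.

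To also dominate $\norm{\alpha \pa_t u}_{\Ls^2(J, \Vs^\prime)}^2$, I augment $y_1$ by $\delta\, R_\Vs^{-1}(\alpha \pa_t u)$, where $R_\Vs : \Vs \to \Vs^\prime$ is the Riesz isomorphism induced by the inner product of $\Vs$; this produces the positive contribution $\delta \norm{\alpha \pa_t u}_{\Ls^2(J, \Vs^\prime)}^2$ from the time-derivative pairing, at the price of a cross term bounded by $\delta C_A \norm{u}_{\Ls^2(J, \Vs)} \norm{\alpha \pa_t u}_{\Ls^2(J, \Vs^\prime)}$, which is absorbed by Young's inequality provided $\delta$ is chosen of order $c_A/C_A^2$. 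The resulting $y$ satisfies $\norm{y}_{\YY} \le \norm{u}_{\Ls^2(J, \Vs)} + \delta \norm{\alpha \pa_t u}_{\Ls^2(J, \Vs^\prime)} + \mu \norm{u(0)}_{\Ls^2(\Sm_0)}$, and the last factor is controlled by $\norm{u}_{\XX}$ through \Cref{rem:supp}, which completes the construction with an explicit $c_B$ depending on $c_A$, $C_A$, $C_\alpha$, $C_{\vb}$, $\alpha_0$, $C_{\textup{emb}}$, $T$.

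The main obstacle is the careful bookkeeping required to estimate the Reynolds transport residual $R(u)$: the mixed term $2 \alpha u\, \vb \cdot \nabla u$ must be split through Young's inequality in such a way that its $\norm{u}_{\Vs}^2$-part is strictly smaller than the coercivity constant $c_A$, while the complementary $\norm{\sqrt{\alpha}\, u}_{\Ls^2(\Om)}^2$-part matches exactly the $\lambda_1$-reserve supplied by coercivity. It is this delicate balance that dictates the formula \eqref{eq:lambda1} for $\lambda_1$ and explains why \Cref{asmp:A} is stated with the shifted constant $\lambda_0 - \lambda_1$ rather than $\lambda_0$ alone.
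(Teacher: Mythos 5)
Your proposal is correct and follows essentially the same route as the paper's proof: combine a coercivity test function of the form $(u,\,\mathrm{const}\cdot u(0))$ (handled via the integration by parts formula \eqref{eq:IBP}, with the Reynolds residual absorbed using Young's inequality and the specific value of $\lambda_1$) with the Riesz representative of $\alpha\pa_t u$ to recover the dual norm, then balance the cross term and bound $\norm{y}_{\YY}$ via \eqref{eq:embedding}. The only differences are cosmetic: you take $y_2=\mu\,u(0)$ instead of $\alpha(0)u(0)$ and scale the Riesz part by a small $\delta$ rather than scaling the coercivity part by a large $\beta$, which is the same balancing argument in different coordinates.
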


\begin{proof}
    As noted in \Cref{rem:lambda}, we can set $\lambda_0 = 0$ in \eqref{eq:coercivity} without loss of generality. We introduce the Riesz representation operator $R : \Ls^2(J, \Vs^\prime) \to \Ls^2(J, \Vs)$, i.e., 
    \[
        (Rz, y_1)_{\Ls^2(J, \Vs)} = \inprod{z, y_1} \qqqqqqq \fa y_1 \in \Ls^2(J, \Vs),
    \]
    where $\paren{\cdot, \cdot}_{\Ls^2(J, \Vs)}$ denotes the inner product in $\Ls^2(J, \Vs)$. It is clear that
    \[
        \norm{R z}_{\Ls^2(J, \Vs)} = \norm{z}_{\Ls^2(J, \Vs^\prime)} \qqqqqqq \fa z \in \Ls^2(J, \Vs^\prime).
    \]
    For any $u \in \XX$, we first take $\ovl{y} := \paren{R(\alpha \pa_t u), 0} \in \YY$ to have that
    \begin{align*}
        B\paren{u, \ovl{y}} 
        & = \norm{R(\alpha \pa_t u)}^2_{\Ls^2(J, \Vs)} + \int\limits_0^T \inprod{A(t) u(t), R(\alpha \pa_t u)(t)} \dt \\
        & \ge \norm{R(\alpha \pa_t u)}^2_{\Ls^2(J, \Vs)} - C_A \norm{u}_{\Ls^2(J, \Vs)} \norm{R(\alpha \pa_t u)}_{\Ls^2(J, \Vs)} \\
        & \ge (1 - \veps) \norm{\alpha \pa_t u}^2_{\Ls^2(J, \Vs^\prime)} - \dfrac{C^2_A}{4 \veps} \norm{u}^2_{\Ls^2(J, \Vs)},
    \end{align*}
    for any $\veps > 0$. Next, choosing $\tilde{y} := \paren{2 u,  \alpha(0) u(0)} \in \YY$ and applying the integration by parts formula \eqref{eq:IBP} give us that
    \begin{align*}
        B\paren{u, \tilde{y}} & = 2 \int\limits_0^T \inprod{(\alpha \pa_t u)(t), u(t)} + \inprod{A(t) u(t), u(t)} \dt + \norm{\paren{\sqrt{\alpha} u}(0)}^2_{\Ls^2(\Om)} \\
        & = \norm{\paren{\sqrt{\alpha} u}(T)}^2_{\Ls^2(\Om)} + \int\limits_0^T 2 \inprod{A(t) u(t), u(t)} - \int\limits_\Om \paren{ u^2 \pa_t \alpha + \dive{\paren{\alpha u^2 \vb}}} \dx \dt \\
        & \ge \norm{\paren{\sqrt{\alpha} u}(T)}^2_{\Ls^2(\Om)} + c_A \norm{u}^2_{\Ls^2(J, \Vs)},
    \end{align*}
    where we have employed the following estimate
    \begin{align*}
        \int\limits_0^T \int\limits_\Om \paren{u^2 \pa_t \alpha + \dive{\paren{\alpha  u^2 \vb}}} \dx \dt & \le 2 C_{\alpha} \paren{C_{\vb} + 1} \int\limits_0^T \norm{u(t)}_{\Ls^2(\Sm(t))} \norm{u(t)}_{\Hs^1(\Om)} \dt \\
        & \le \dfrac{2 C_{\alpha} C_{\text{emb}} (C_{\vb} + 1)}{\sqrt{\alpha_0}} \norm{\sqrt{\alpha} u}_{\Ls^2(J, \Ls^2(\Om))} \norm{u}_{\Ls^2(J, \Vs)} \\
        & \le c_A \norm{u}^2_{\Ls^2(J, \Vs)} + 2\lambda_1
        \norm{\sqrt{\alpha} u}^2_{\Ls^2(J, \Ls^2(\Om))},
    \end{align*}
    with $\lambda_1$ defined in \eqref{eq:lambda1}. 
    Now, we can take $y = \ovl{y} + \beta \tilde{y} \in \YY$, with $\beta > 0$, to arrive at
    \begin{align*}
        B(u, y) & \ge (1 - \veps) \norm{\alpha \pa_t u}^2_{\Ls^2(J, \Vs^\prime)} + \paren{\beta c_A - \dfrac{C_A^2}{4\veps}} \norm{u}^2_{\Ls^2(J, \Vs)} \\
        & \ge \min \brac{1 - \veps, \beta c_A - \dfrac{C_A^2}{4\veps}} \norm{u}^2_{\XX}.
    \end{align*}
    On the other hand, we can bound $y$ as follows
    \begin{align*}
        \norm{y}_{\YY} 
        & \le \norm{R(\alpha \pa_t u)}_{\Ls^2(J, \Vs)} + 2\beta \norm{u}_{\Ls^2(J, \Vs)} + \beta C_{\alpha} \norm{u(0)}_{\Ls^2(\Sm_0)} \\
        & \le  \paren{1 +  2 \beta+ \dfrac{\beta C_{\alpha} \wt{C}_{\textup{emb}, 2}}{\sqrt{\alpha_0}}} \norm{u}_{\XX},
    \end{align*}
    where the inequality \eqref{eq:embedding} has been used with the constant $\wt{C}_{\textup{emb}, 2}$ defined in \eqref{eq:constant}. 
    Finally, fixing $0 < \veps < 1$ and then choosing a sufficiently large $\beta > 0$ such that $4 \veps \beta c_A > C_A^2$, we complete the proof.
\end{proof}

\begin{lemma}[Surjectivity]
    If $B(u, y) = 0$ for all $u \in \XX$, then $y = 0$ in $\YY$.
\end{lemma}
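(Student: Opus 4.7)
The plan is to identify $y_1$ as an element of $\XX$ satisfying a weak backward problem together with the terminal condition $(\sqrt{\alpha} y_1)(T) = 0$, then invoke the integration by parts formula of Lemma~\ref{lem:IBP} with $u = z = y_1$, and finally use the coercivity of $A$ (together with the $\lambda_1$-based estimate from Lemma~\ref{lem:infsup}) to force $y_1 \equiv 0$; the vanishing of $y_2$ will follow from a compatibility relation derived along the way.

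First I insert separable test functions $u(t, \xb) := \phi(t) w(\xb)$ with $\phi \in \DD(J), w \in \Vs$ into $B(u, y) = 0$. Since $u(0) = 0$ and, for such smooth $u$, $\alpha \pa_t u$ reduces to the pointwise product $\alpha \phi' w$ (by the discussion following Definition~\ref{def:derivative}), this yields
\[
    \int\limits_0^T \phi'(t) (\alpha(t) w, y_1(t)) \dt = -\int\limits_0^T \phi(t) \inprod{A(t) w, y_1(t)} \dt,
\]
so the scalar map $f_w(t) := (\alpha(t) w, y_1(t))$ belongs to $\Hs^1(J) \emb \Cs(\ovl J)$ with $f_w'(t) = \inprod{A(t) w, y_1(t)}$. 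Combining this with Definition~\ref{def:derivative} and an integration by parts in time produces the explicit representation
\[
    \inprod{(\alpha \pa_t y_1)(t), w} = \inprod{A(t) w, y_1(t)} - (w \pa_t \alpha(t), y_1(t)) - \int\limits_\Om \dive{\paren{\alpha(t) y_1(t) w \vb(t)}} \dx,
\]
from which the embedding $\Vs \emb \Hs^1(\Om)$ and the regularity of $\alpha, \vb$ imply $\alpha \pa_t y_1 \in \Ls^2(J, \Vs^\prime)$, hence $y_1 \in \XX$. Next I take $u(t,\xb) := \psi(t) w(\xb)$ with $\psi \in \Cs^1(\ovl J)$ and integrate $\int_0^T \psi' f_w \dt$ by parts against the continuous representative $f_w$: the choice $\psi(0) = 0, \psi(T) = 1$ forces $(\alpha(T) w, y_1(T)) = 0$ for every $w \in \Vs$, equivalently $(\sqrt{\alpha} y_1)(T) = 0$ in $\Ls^2(\Om)$ by density of $\Vs \emb \Ls^2(\Om)$, while $\psi(0) = 1, \psi(T) = 0$ yields the compatibility relation $(\alpha(0) w, y_1(0)) = (w, y_2)_{\Ls^2(\Sm_0)}$ for every $w \in \Vs$.

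With $y_1 \in \XX$ secured, I apply Lemma~\ref{lem:IBP} with $u = z = y_1$ and $t_1 = 0, t_2 = T$, substitute the explicit formula for $\alpha \pa_t y_1$ tested at $w = y_1(t)$, and use $(\sqrt{\alpha} y_1)(T) = 0$. The $\pa_t \alpha$ and divergence terms partially cancel, leaving
\[
    2 \int\limits_0^T \inprod{A y_1, y_1} \dt - \int\limits_0^T \int\limits_\Om \paren{y_1^2 \pa_t \alpha + \dive{\paren{\alpha y_1^2 \vb}}} \dx \dt = -\norm{(\sqrt{\alpha} y_1)(0)}^2_{\Ls^2(\Om)}.
\]
Taking $\lambda_0 = 0$ as permitted by Remark~\ref{rem:lambda}, coercivity~\eqref{eq:coercivity} and exactly the estimate for the divergence term used in Lemma~\ref{lem:infsup} bound the left-hand side below by $c_A \norm{y_1}^2_{\Ls^2(J, \Vs)}$, whereas the right-hand side is non-positive. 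This forces $y_1 = 0$ in $\Ls^2(J, \Vs)$; the continuous representative of $\alpha y_1$ then vanishes identically, so the compatibility relation reduces to $(w, y_2)_{\Ls^2(\Sm_0)} = 0$ for every $w \in \Vs$, and density yields $y_2 = 0$ in $\Ls^2(\Sm_0)$.

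The main obstacle will be the first step: promoting $y_1$ from $\Ls^2(J, \Vs)$ to $\XX$ by showing that the explicit formula above represents an $\Ls^2(J, \Vs^\prime)$-element. Without this, Lemma~\ref{lem:IBP} cannot be applied with $y_1$ in the test slot and the entire energy argument collapses; conversely, once $y_1 \in \XX$ is established, the precise match between the formula for $\alpha \pa_t y_1$ and the non-standard terms in Lemma~\ref{lem:IBP} is exactly what makes the coercivity estimate from Lemma~\ref{lem:infsup} applicable verbatim.
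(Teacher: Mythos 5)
Your proof is correct and follows essentially the same route as the paper: separable test functions $\phi w$ to identify $\alpha\pa_t y_1$ and promote $y_1$ to $\XX$, endpoint identities from test functions that do not vanish at $t=0$ or $t=T$, and then the coercivity/Reynolds-term estimate from Lemma~\ref{lem:infsup} to force $y_1=0$ and hence $y_2=0$. The only cosmetic difference is that you extract and use the terminal condition $(\sqrt{\alpha}y_1)(T)=0$ via the scalar functions $f_w$, whereas the paper simply keeps $\norm{(\sqrt{\alpha}y_1)(T)}^2_{\Ls^2(\Om)}$ as a nonnegative term in the final energy inequality; both variants close the argument.
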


\begin{proof}
    Let $y \in \YY$ such that for all $u \in \XX$
    \begin{equation}
    \label{eq:surjectivity}
        B(u, y) = \int\limits_0^T \inprod{(\alpha \pa_t u)(t), y_1(t)} + \inprod{A(t) u(t), y_1(t)} \dt + \paren{u(0), y_2} = 0.
    \end{equation}
    We first take $u = \phi w \in \XX$, where $\phi \in \DD(J)$ and $w \in \Vs$, to have that
    \[
        B(u, y) = \int\limits_0^T \int\limits_{\Om} \alpha w y_1 \pa_t \phi \dx \dt + \int\limits_0^T \inprod{A(t) u(t), y_1(t)} \dt = 0.
    \]
    \Cref{def:derivative} implies that $\alpha \pa_t y_1 \in \LL(\DD(J), \Vs^\prime)$ satisfies
    \[
        \inprod{(\alpha \pa_t y_1)(\phi), w} = \int\limits_0^T \inprod{A(t)^\prime y_1(t), u(t)} \dt - \int\limits_0^T \int\limits_\Om \paren{u y_1 \pa_t \alpha + \dive{\paren{\alpha u y_1 \vb}}} \dx \dt,
    \]
    for any $u = \phi w \in \DD(J) \otimes \Vs$, where $A(t)^\prime \in \LL(\Vs, \Vs^\prime)$ is the dual operator to $A(t)$ for a.a. $t \in J$. Due to the density of $\DD(J) \otimes \Vs$ in $\Ls^2(J, \Vs)$ (see \cite{SS2009}), the right-hand side defines a bounded linear functional (of $u$) on $\Ls^2(J, \Vs)$. Therefore, we can conclude $\alpha \pa_t y_1 \in \Ls^2(J, \Vs^\prime)$ or $y_1 \in \XX$. In addition, the following holds for any $u \in \Ls^2(J, \Vs)$
    \begin{multline}
    \label{eq:dual}
        \int\limits_0^T \inprod{(\alpha \pa_t y_1)(t), u(t)} - \inprod{A(t)u(t), y_1(t)} \dt + \int\limits_0^T \int\limits_\Om \paren{u y_1 \pa_t \alpha + \dive{\paren{\alpha u y_1 \vb}}} \dx \dt = 0.
    \end{multline}
    Next, choosing $u = \phi \tilde{u} \in \XX$ in \eqref{eq:surjectivity} and \eqref{eq:dual}, with $\phi \in \Cs^1(\ovl{J})$ such that $\phi(0) = 1$ and $\phi(T) = 0$ and $\tilde{u} \in \XX$, then adding them together and employing the integration by parts formula \eqref{eq:IBP}, we arrive at
    \[
        \paren{\paren{\sqrt{\alpha} \tilde{u}}(0), \paren{\sqrt{\alpha} y_1} (0)} = \paren{\tilde{u}(0), y_2},
    \]
    which holds true for any $\tilde{u} \in \XX$. Taking into account the density of $\Vs$ in $\Ls^2(\Om)$, we can conclude that $y_2 = \alpha(0) y_1 (0)$ in $\Ls^2(\Sm_0)$. Finally, taking $u = y_1 \in \XX$ in \eqref{eq:surjectivity} and performing analogous estimates as in the proof of \Cref{lem:infsup}, we end up with 
    \begin{align*}
        0 =  B(y_1, y) & = \int\limits_0^T \inprod{(\alpha \pa_t y_1)(t), y_1(t)} + \inprod{A(t) y_1(t), y_1(t)} \dt + \norm{\paren{\sqrt{\alpha} y_1}(0)}^2_{\Ls^2(\Om)}\\
        & \ge \dfrac{1}{2} \norm{\paren{\sqrt{\alpha} y_1}(T)}^2_{\Ls^2(\Om)} + \dfrac{1}{2} \norm{\paren{\sqrt{\alpha} y_1}(0)}^2_{\Ls^2(\Om)} + \dfrac{c_A}{2} \norm{y_1}^2_{\Ls^2(J, \Vs)},
    \end{align*}
    implying that $y_1 = 0$ in $\Ls^2(J, \Vs)$ and $y_2 = 0$ in $\Ls^2(\Sm_0)$, or equivalently $y = 0$ in $\YY$. We have completed the proof.
\end{proof}

As a consequence of the Banach--Ne\v{c}as--Babu\v{s}ka theorem, we conclude the paper with the following theorem. 

\begin{theorem}[Well-posedness]
    The variational problem \eqref{eq:vf} has a unique solution $u \in \XX$ for any $g_1 \in \Ls^2(J, \Vs^\prime)$ and $g_2 \in \Ls^2(\Sm_0)$. Moreover, the following a priori estimate holds
    \[
        \norm{u}_{\XX} \le \dfrac{1}{c_B} \paren{\norm{g_1}_{\Ls^2(J, \Vs^\prime)} + \norm{g_2}_{\Ls^2(\Sm_0)}}.
    \]
\end{theorem}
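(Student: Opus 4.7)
The plan is to invoke the Banach--Ne\v{c}as--Babu\v{s}ka theorem directly, using the inf-sup condition and the surjectivity lemma already established as the two main hypotheses. What remains is to verify the boundedness of the bilinear form $B$ and the linear form $f$, and then to extract the a priori estimate from the inf-sup inequality.

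First, I would check boundedness of $f : \YY \to \R$. The Cauchy--Schwarz inequality gives $\abs{f(y)} \le \norm{g_1}_{\Ls^2(J,\Vs^\prime)} \norm{y_1}_{\Ls^2(J, \Vs)} + \norm{g_2}_{\Ls^2(\Sm_0)} \norm{y_2}_{\Ls^2(\Sm_0)}$, so $\norm{f}_{\YY^\prime} \le \norm{g_1}_{\Ls^2(J, \Vs^\prime)} + \norm{g_2}_{\Ls^2(\Sm_0)}$. Next, I would verify the boundedness of $B : \XX \times \YY \to \R$: the first term in $B(u, y)$ is bounded by $\norm{\alpha \pa_t u}_{\Ls^2(J, \Vs^\prime)} \norm{y_1}_{\Ls^2(J, \Vs)}$, the second by $C_A \norm{u}_{\Ls^2(J, \Vs)} \norm{y_1}_{\Ls^2(J, \Vs)}$ via \Cref{asmp:A}, and the initial-trace term by $\norm{u(0)}_{\Ls^2(\Sm_0)} \norm{y_2}_{\Ls^2(\Sm_0)}$, which is controlled by $\tfrac{\wt{C}_{\textup{emb},2}}{\sqrt{\alpha_0}} \norm{u}_{\XX} \norm{y_2}_{\Ls^2(\Sm_0)}$ thanks to \Cref{lem:IBP} and \Cref{rem:supp}. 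Combining these estimates yields continuity of $B$ with some constant $C_B > 0$.

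With $B$ and $f$ both continuous, and with the inf-sup condition (\Cref{lem:infsup}) and the nondegeneracy/surjectivity statement in hand, the hypotheses of the Banach--Ne\v{c}as--Babu\v{s}ka theorem (\cite[Theorem~25.9]{Ern2021b}) are all satisfied. I would therefore conclude that there exists a unique $u \in \XX$ such that $B(u, y) = f(y)$ for all $y \in \YY$. The a priori estimate follows immediately: for this $u$, the inf-sup inequality gives
\[
    c_B \norm{u}_{\XX} \le \sup_{y \in \YY \setminus \brac{0}} \dfrac{B(u, y)}{\norm{y}_{\YY}} = \sup_{y \in \YY \setminus \brac{0}} \dfrac{f(y)}{\norm{y}_{\YY}} \le \norm{g_1}_{\Ls^2(J, \Vs^\prime)} + \norm{g_2}_{\Ls^2(\Sm_0)},
\]
which is the claimed bound after dividing by $c_B$.

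No genuine obstacle remains here because all the analytical work has been absorbed into the preceding lemmas -- the integration by parts formula \eqref{eq:IBP} (which legitimates the appearance of $u(0)$ in $B$), the ``embedding'' \eqref{eq:embedding} (which makes the initial-trace functional bounded on $\XX$), and of course \Cref{lem:infsup} and the surjectivity lemma. The only mild bookkeeping item to watch is ensuring that the reduction in \Cref{rem:lambda} (absorbing $\lambda_0$) is consistent with how $g_1$ is interpreted in the a priori estimate; since the substitution $u = \hat{u} e^{\lambda_0 t}$ is an isomorphism on $\XX$ with constants depending only on $T$ and $\lambda_0$, the final estimate is unaffected up to a harmless rescaling of $c_B$.
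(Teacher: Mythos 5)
Your proposal is correct and follows essentially the same route as the paper, which likewise deduces the theorem directly from the Banach--Ne\v{c}as--Babu\v{s}ka theorem once \Cref{lem:infsup} and the surjectivity lemma are in place; your explicit verification of the boundedness of $B$ and $f$ and the derivation of the a priori bound from the inf-sup constant simply spell out steps the paper leaves implicit.
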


\section{Conclusions}
\label{sec:conclusions}

In this paper, we have developed a functional framework for the variational approach of parabolic PDEs on a fixed domain comprising evolving subdomains and interfaces. The function spaces introduced naturally generalize standard Sobolev--Bochner spaces, accommodating a material coefficient that may be discontinuous across the evolving interface and partially degenerate in subdomains. We have established key properties of these spaces, including the density of smooth functions, an appropriate ``embedding'' theory, and an integration by parts formula, which are essential in deriving the variational formulation. Finally, we have demonstrated the well-posedness of the space-time variational problem in this natural setting using standard arguments.

The result presented in this work provides a foundation for developing numerical methods for parabolic PDEs with evolving subdomains. Potential directions include classical time-stepping (Rothe's method) finite element schemes \cite{LSV2022a,LSV2022c}, high-order time-stepping and finite element methods \cite{FR2017,FJR2023}, discontinuous Galerkin approaches \cite{Moore2019}, and space–time finite element methods \cite{Steinbach2015,NLH+2025}. A detailed investigation of these numerical schemes will be the subject of future work.

\section*{Funding}

This work was funded by the European Research Council (ERC) under the European Union’s Horizon 2020 Research and innovation programme (Grant agreement No. 101001847) and the FWO Senior Research Grant G083525N.

\bibliography{abrv_ref}
\bibliographystyle{unsrt}
\end{document}